\RequirePackage{fix-cm}

\documentclass[smallextended]{svjour3}
\smartqed

\usepackage{graphicx}
\usepackage{amscd,amsfonts,amssymb,amsmath,latexsym,array,hhline,xcolor}
\usepackage{mathtools}
\usepackage{booktabs}
\usepackage{enumitem}
\usepackage{times}

\newcommand{\bbz}{\mathbb{Z}}
\newcommand{\bbr}{\mathbb{R}}
\newcommand{\bbe}{\mathbb{E}}
\newcommand{\bbp}{\mathbb{P}}
\newcommand{\cL}{\mathcal{L}}
\newcommand{\cF}{\mathcal{F}}
\newcommand{\cE}{\mathcal{E}}
\newcommand{\eps}{\varepsilon}
\newcommand{\one}{\mathbf{1}}

\begin{document}

\title{Exact asymptotics of the ruin probability in the Sparre Andersen model for non-life insurance with investments in a L\'evy process}

\titlerunning{Exact asymptotics of the ruin probability in the Sparre Andersen model}

\author{Platon Promyslov}

\institute{\at
Faculty of Mechanics and Mathematics, Lomonosov Moscow State University, Moscow, Russia\\
Faculty of Computer Science, HSE University, Moscow, Russia\\
\email{platon.promyslov@gmail.com}
}

\date{}

\maketitle

\begin{abstract}
We establish the exact power-law asymptotics of the ruin probability, as a function of the initial capital, in the Sparre Andersen model for non-life insurance with investments in an arbitrary L\'evy process. The main advance over previous work, where only the two-sided order was obtained, is a proof of the existence of an exact limiting equality with a positive finite constant. The method combines a reduction to discrete time, the one-dimensional Kesten--Goldie theorem for the stationary measure of the associated affine stochastic recursion, and Goldie's result on the asymptotics of the supremum of a perpetuity. The limiting constant is bounded below by the integral Goldie constant of the stationary measure. The exponent coincides with the positive Cram\'er root of the Laplace exponent of the L\'evy process given by the logarithm of the risky-asset price; the distribution of the inter-jump times of the business process affects only the constant, not the exponent. In the final section the constant is compared with an explicit formula in terms of double confluent Heun functions, obtained for the Cram\'er--Lundberg submodel with proportional investment in a geometric Brownian motion; the agreement of the two approaches is illustrated numerically.
\end{abstract}

\keywords{Ruin probability \and Sparre Andersen model \and Non-life insurance \and Implicit renewal theory \and Kesten--Goldie theorem \and Perpetuity equation}

\subclass{60G50; 60G51; 60K05; 91G05}

\section{Introduction}

The study of the ruin probability of an insurance company in the presence of risky investments goes back to the work of Paulsen \cite{Paulsen1993}, where the dramatic change in the nature of the asymptotics of the ruin probability $\Psi(u)$ was first observed---from exponential, in models without investments, to power-law, in models with investments in a risky asset. In parallel, Kalashnikov and Norberg \cite{KalashnikovNorberg2002} obtained power-law asymptotics in the Cram\'er--Lundberg model with risky investments by applying the Kesten--Goldie theory to the associated affine stochastic equation. The theory has developed along two lines: the analytic approach, via integro-differential equations \cite{Paulsen2002,Grandits2004,GaierGrandits2004,AK2026,P2601,P2604,PRYu2026}, and the probabilistic approach, via distributional equations and implicit renewal theory \cite{BDM2016,EKS2022,Goldie1991,KLP2026,KP2023}. A modern exposition of the latter is contained in the monograph \cite{BDM2016}, and the general theory of ruin in \cite{AsmussenAlbrecher2010}.

Two-sided estimates for the order of the ruin probability in the Cram\'er--Lundberg model with investments in a geometric Brownian motion were obtained by Pergamenshchikov and Zeitouni \cite{PergamenshchikovZeitouni2006}. Eberlein, Kabanov and Schmidt \cite{EKS2022} considered the non-life insurance case in the Sparre Andersen model with investments in an arbitrary L\'evy process and established the two-sided estimate for the ruin probability
\begin{equation}\label{eq:liminflimsup}
0<\liminf_{u\to\infty}u^\beta\Psi(u)\le\limsup_{u\to\infty}u^\beta\Psi(u)<\infty,
\end{equation}
where $\beta>0$ is the unique positive root of the cumulant $H(q):=\ln\bbe[e^{-qV_{T_1}}]$, and $V$ is the logarithm of the risky-asset price. In \cite{KLP2026} the conditions of these estimates were substantially weakened, and analogous two-sided estimates were obtained in the mixed model using Proposition 2.5.4 of \cite{BDM2016}. The existence of an exact limit between the two extremes in \eqref{eq:liminflimsup} remained an open question in the Sparre Andersen model for the non-life case with an arbitrary L\'evy process.

In parallel, in \cite{P2601}, by reducing the problem to an integro-differential equation, an exact limit $\Psi(u)\sim Cu^{-\beta}$ with an explicit computable constant $C$ was obtained for a particular case---the Cram\'er--Lundberg model for the life insurance case with proportional investment in a geometric Brownian motion and a constant fraction $\kappa\in(0,1]$ invested in the risky asset. In \cite{AK2026}, the existence of a classical $C^2$-smooth solution to the integro-differential equation for an analogous model, but for the non-life insurance case, was established under the moment condition $\bbe[\xi^{\gamma-1}]<\infty$. This condition was relaxed in \cite{P2604}, where $C^2$-smoothness was proved under the minimal moment condition $\bbe[\xi^\varepsilon]<\infty$ for some $\varepsilon>0$, which improves the classical result of Grandits \cite{Grandits2004}, that yielded $W^{2,1}_{\mathrm{loc}}$-regularity under the same moment condition. A subsequent analysis via special functions in \cite{PRYu2026} gave a representation in terms of double confluent Heun functions. The extension to the general model---the Sparre Andersen model with an arbitrary L\'evy process---requires an essentially different apparatus and is the content of the present work.

The main result of the work is the following: in the Sparre Andersen model for non-life insurance with investments in a general L\'evy process, the two-sided estimate \eqref{eq:liminflimsup} turns into the exact equality
\begin{equation}\label{eq:exact-intro}
\lim_{u\to\infty}u^\beta\Psi(u)=C^*\in(0,\infty),
\end{equation}
where $C^*\ge C_+^Y$, and $C_+^Y$ is the integral Goldie constant of the stationary measure of the associated affine recursion, having the explicit representation \eqref{eq:C-plus-Y}.

The method rests on three facts. First, the reduction to a discrete-time model from \cite{EKS2022}: in the non-life case, ruin occurs only at the jump times $T_n$ of the business process, which reduces the problem to the analysis of the tail of the supremum of the discrete chain $\sup_n Y_n\ge u$, where $Y_n:=Y_{T_n}$. Second, the Kesten--Goldie theorem in its one-dimensional form (\cite[Theorem 2.4.4]{BDM2016}, going back to the works of Kesten \cite{Kesten1973} and Goldie \cite{Goldie1991}) gives the power-law asymptotics of the stationary measure $\bbp(Y_\infty>u)\sim C_+^Y u^{-\beta}$. Third, Goldie's result \cite{Goldie1991} on the fixed-point equation $M\stackrel{d}{=}(AM+B)_+$ provides the passage from the asymptotics of the stationary measure to the asymptotics of the supremum, $\bbp(\sup_n Y_n>u)\sim C^*u^{-\beta}$. Finally, the passage from $\bbp(\sup_n Y_n>u)$ to $\Psi(u)=\bbp(\sup_n Y_n\ge u)$ is carried out by means of the two-sided bound $\bbp(\sup_n Y_n>u)\le\Psi(u)\le\bbp(\sup_n Y_n>u-\eps)$ and a subsequent limiting argument. We note that this two-sided bound and the limiting argument require no additional conditions.

Conceptually, the exponent $\beta$ admits a twofold interpretation: as the Cram\'er root of the cumulant $H$ of the random walk $V_{T_n}$ (the formulation of \cite{EKS2022,KLP2026,KP2023}) or, equivalently, as the Cram\'er root of the cumulant $\psi_V$ of the L\'evy process $V$ itself, since
\[
H(\beta)=0\iff\bbe[e^{T_1\psi_V(\beta)}]=1\iff\psi_V(\beta)=0,
\]
by the strict monotonicity of the function $s\mapsto\bbe[e^{sT_1}]$ (a consequence of $T_1>0$ a.s.). This duality gives a transparent interpretation: the exponent is determined by the investment structure $V$ itself, while the distribution of the inter-jump times affects only the constant $C^*$ in \eqref{eq:exact-intro}.

The transfer of the result to the annuity and mixed models is a natural direction for further research and will be the subject of separate papers of the series. In the annuity model the trajectories of $Y$ have a positive continuous drift between jumps, so that $\sup_t Y_t\ne\sup_n Y_{T_n}$ and the discrete reduction fails. In the mixed model $F_\xi$ charges both half-axes, and the left tail $\bbp(Y_\infty<-u)$ requires a separate analysis in the spirit of Goldie's theorem applied to the transform $-Y_\infty$.

The paper is organized as follows. Section \ref{sec:model} describes the model, states the assumptions and the non-lattice condition, gives the reduction to a discrete-time model in the spirit of \cite{EKS2022}, and establishes the convergence of the associated perpetuity. Section \ref{sec:main} states and proves the main result \eqref{eq:exact-intro}. Section \ref{sec:CLgBm} establishes the connection with the exact analytic solution \cite{PRYu2026} for the Cram\'er--Lundberg submodel with proportional investment and gives a numerical verification. Section \ref{sec:discussion} contains a discussion and directions for further research.

\section{The Model and Assumptions}\label{sec:model}

All random elements are defined on a stochastic basis $(\Omega,\cF,(\cF_t)_{t\ge0},\bbp)$ satisfying the usual conditions.

Let $R=(R_t)_{t\ge0}$ be a L\'evy process with triplet $(a,\sigma^2,\Pi)$, $\Pi((-\infty,-1])=0$. The stochastic exponential $\cE(R)=e^V$ is interpreted as the price of the risky asset. We introduce the log-price
\[
V_t=at-\tfrac12\sigma^2 t+\sigma W_t+h*(\mu-\nu)_t+\bigl(\ln(1+x)-h\bigr)*\mu_t,
\]
where $h(x)=x\one_{\{|x|\le1\}}$, $\mu$ is the jump measure of $R$, and $\nu$ is its compensator. Then $V$ is again a L\'evy process. By $\psi_V(q):=\ln\bbe[e^{-qV_1}]$ we denote its Laplace exponent.

The business process $P$ is independent of $R$ and has the form $P_t=ct+\sum_{i=1}^{N_t}\xi_i$, where $N=(N_t)_{t\ge0}$ is a renewal counting process with jump times $0<T_1<T_2<\dots$; the inter-jump times $U_i:=T_i-T_{i-1}$, $i\ge1$, form a sequence of independent identically distributed random variables with law $F$, where $F(\{0\})=0$, and the jumps $\xi_i:=\Delta P_{T_i}$ are independent identically distributed random variables with law $F_\xi$, $F_\xi(\{0\})=0$. The sequences $(U_i)$ and $(\xi_i)$ are mutually independent. We set $T_0:=0$.

In the present work we consider the non-life insurance case: $c>0$ and $\xi_i<0$ almost surely. The process is observed starting from the moment $T_0:=0$, which corresponds to the standard setting of \cite{EKS2022}.

The capital process $X^{u}$ satisfies the linear stochastic equation
\[
X_t^{u}=u+\int_0^t X_{s-}^{u}\,dR_s+P_t,\qquad X_0^{u}=u>0.
\]
We define the ruin time $\tau^{u}:=\inf\{t\ge0:X_t^{u}\le0\}$ and the ruin probability $\Psi(u):=\bbp(\tau^{u}<\infty)$.

By the stochastic Cauchy formula \cite[Corollary 3.2]{KP2023},
\[
X_t^{u}=e^{V_t}(u-Y_t),\qquad Y_t:=-\int_{(0,t]}e^{-V_{s-}}\,dP_s.
\]
The strict positivity of $e^{V_t}$ yields $\tau^{u}=\inf\{t\ge0:Y_t\ge u\}$.

\begin{lemma}\label{lem:Y-structure}
In the non-life insurance case, the trajectories of $Y$ decrease continuously on each interval $(T_{n-1},T_n)$ and have positive jumps at the moments $T_n$. Consequently,
\begin{equation}\label{eq:Psi-sup}
\Psi(u)=\bbp\Bigl(\sup_{n\ge0}Y_{T_n}\ge u\Bigr).
\end{equation}
\end{lemma}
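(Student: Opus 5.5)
The plan is to prove the pathwise statement first, then identify the ruin event with $\{\exists n\ge0:\,Y_{T_n}\ge u\}$, and finally show that this event differs from $\{\sup_{n\ge0}Y_{T_n}\ge u\}$ only by a null set. That last step is what gives \eqref{eq:Psi-sup} for \emph{every} $u>0$.

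\emph{Path structure.} Since $dP_s=c\,ds$ on $(T_{n-1},T_n)$, we have for $t\in[T_{n-1},T_n)$
\[
Y_t=Y_{T_{n-1}}-c\int_{T_{n-1}}^t e^{-V_{s}}\,ds .
\]
This is continuous and strictly decreasing because $c>0$ and $e^{-V}>0$. The integrand may use $V_s$ in place of $V_{s-}$ because $V$ has countably many jumps. The process $V$ is independent of $N$, and $T_n$ has no atoms in the relevant sense. Hence a.s. $V$ does not jump at any $T_n$, and
\[
\Delta Y_{T_n}=-e^{-V_{T_n-}}\xi_n>0 .
\]
It follows that $\sup_{t\in[T_{n-1},T_n)}Y_t=Y_{T_{n-1}}$, and the supremum is attained. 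Since $\tau^u=\inf\{t:Y_t\ge u\}$, we get
\[
\{\tau^u<\infty\}=\{\exists n\ge0:\,Y_{T_n}\ge u\}.
\]

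\emph{Closing the gap.} We have
\[
\{\sup_n Y_{T_n}\ge u\}\setminus\{\exists n:\,Y_{T_n}\ge u\}=\{\sup_n Y_{T_n}=u,\ \text{not attained}\}=:D_u .
\]
On $D_u$ there is a subsequence along which $Y_{T_{n_k}}\uparrow u$. I will use the a.s. convergence $Y_{T_n}\to Y_\infty$ of the associated perpetuity, which the paper establishes in this section under the standing assumptions. Given that convergence, the non-attained supremum equals the limit, so $D_u\subset\{Y_\infty=u\}$ up to a null set.

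\emph{Main obstacle: $Y_\infty$ has no atoms.} This is the step I expect to be hardest. It suffices to have $\bbp(Y_\infty=u)=0$ for each $u$. The variable $Y_\infty$ solves the affine fixed-point equation
\[
Y_\infty\stackrel{d}{=}A\,Y'_\infty+B,\qquad A=e^{-V_{T_1}},\quad B=Y_{T_1},
\]
with $Y'_\infty$ independent of $(A,B)$. By the pure-type theorem for perpetuities (Grincevi\v{c}ius; see \cite{BDM2016}, Ch.~2), the law of $Y_\infty$ is either degenerate or continuous. It remains to exclude degeneracy. Suppose $Y_\infty\equiv y_0$. Then $Ay_0+B=y_0$ a.s. Conditionally on $U_1$ and $V$, the term $B$ still contains the independent nondegenerate summand $-e^{-V_{T_1}}\xi_1$, whereas $A$ does not depend on $\xi_1$. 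This is a contradiction, because $F_\xi$ is not a point mass. The point-mass case of $F_\xi$ would be handled separately using the randomness of $U_1$, or is excluded by the paper's non-lattice condition.

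Hence $\bbp(D_u)=0$ for every $u$, and \eqref{eq:Psi-sup} holds for all $u>0$.
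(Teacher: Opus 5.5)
Your path-structure step is the paper's argument. You are right that it only gives $\{\tau^u<\infty\}=\{\exists n:\,Y_{T_n}\ge u\}$; the paper passes from $\sup_t Y_t=\sup_n Y_{T_n}$ to \eqref{eq:Psi-sup} without addressing your event $D_u$. Your reduction $D_u\subset\{Y_\infty^{\mathrm{fwd}}=u\}$ (a.s.) and the appeal to Grincevi\v{c}ius' continuity dichotomy are sound. Three smaller remarks:
\begin{itemize}
\item $F$ may have atoms. The correct reason $V$ does not jump at $T_n$ is that a L\'evy process has no fixed discontinuities, together with the independence of $V$ and $(T_n)$. You do not even need this for $\Delta Y_{T_n}=-e^{-V_{T_n-}}\xi_n>0$.
\item Your route uses (A1)--(A3), which the lemma as stated does not assume.
\item Theorem~\ref{thm:main} only needs $\bbp(M^*>u)\le\Psi(u)\le\bbp(M^*\ge u)$. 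So the atom question matters only for the exact identity at every $u$.
\end{itemize}

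The genuine gap is in excluding degeneracy. Your argument via $\xi_1$ requires $F_\xi$ to be non-degenerate, but deterministic claims are allowed: $F_\xi(\{0\})=0$ only excludes an atom at $0$. Neither fallback works. Condition (N) constrains $V_{T_1}$, not $F_\xi$. The randomness of $U_1$ is unavailable when $F$ is also a Dirac mass, which the Sparre Andersen setting permits.

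The missing idea is to use the randomness of $V$. For $F\otimes F_\xi$-a.e.\ $(t,-x)$ with $t,x>0$, degeneracy at $y_0$ would give $(y_0+x)e^{-V_t}-c\int_0^t e^{-V_s}\,ds=y_0$ a.s. Set $V'_s:=V_{t/2+s}-V_{t/2}$ and $G:=(y_0+x)e^{-V'_{t/2}}-c\int_0^{t/2}e^{-V'_s}\,ds$. Then $G=e^{V_{t/2}}\bigl(y_0+c\int_0^{t/2}e^{-V_s}\,ds\bigr)$ a.s. So $G$ is independent of $\cF^V_{t/2}$ and also a.s.\ equal to an $\cF^V_{t/2}$-measurable variable, hence a.s.\ equal to a constant $k$. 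Since $V'\stackrel{d}{=}V$, also $(y_0+x)e^{-V_{t/2}}-c\int_0^{t/2}e^{-V_s}\,ds=k$ a.s. Eliminating the integral yields $(y_0+x-k)e^{-V_{t/2}}=k-y_0$ a.s. By (A1), $V$ is not a pure drift, so $V_{t/2}$ is non-degenerate. This forces $k=y_0$ and $x=0$, contradicting $\xi_1<0$.

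The paper's verification of $\bbp(M_1x+Q_1=x)<1$ in Lemma~\ref{lem:Y-inf-asymptotic} rests on the same unjustified non-degeneracy of $\xi_1$, and the same argument repairs it.
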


\begin{proof}
Between jumps $dP_s=c\,ds$, hence $dY_s=-e^{-V_{s-}}c\,ds<0$. At the moment $T_n$ the jump equals $\Delta Y_{T_n}=-e^{-V_{T_n-}}\xi_n>0$ because $\xi_n<0$. Since $Y$ decreases between jumps, the supremum over $[T_n,T_{n+1})$ is attained at $T_n$. Hence $\sup_{t\ge0}Y_t=\sup_{n\ge0}Y_{T_n}$, and identity \eqref{eq:Psi-sup} follows from $\tau^{u}=\inf\{t:Y_t\ge u\}$.

\end{proof}

Lemma \ref{lem:Y-structure} is a particular case of the reduction to a discrete-time model stated explicitly in \cite{EKS2022}: in the non-life insurance model it suffices to track the reserve process along the sequence of jump times $T_n$. Set $Y_n:=Y_{T_n}$ and $M^*:=\sup_{n\ge0}Y_n$. By \eqref{eq:Psi-sup}, $\Psi(u)=\bbp(M^*\ge u)$.

Put
\[
M_m:=e^{-(V_{T_m}-V_{T_{m-1}})},\qquad Q_m:=-\int_{T_{m-1}}^{T_m}e^{-(V_{s-}-V_{T_{m-1}})}\,dP_s,\qquad m\ge1.
\]
By the independence of the increments of the L\'evy process $V$, the independence of $P$ and $V$, and the independence and identical distribution of the inter-jump times $U_i$, the pairs $(M_m,Q_m)_{m\ge1}$ form a sequence of independent identically distributed random pairs. A direct computation gives the explicit representation of the chain $(Y_n)$:
\begin{equation}\label{eq:Y-sum}
Y_n=\sum_{k=1}^n\Bigl(\prod_{i=1}^{k-1}M_i\Bigr)Q_k,\qquad Y_0=0.
\end{equation}
We emphasize that \eqref{eq:Y-sum} is not the affine recursion $Y_n=M_nY_{n-1}+Q_n$. The latter holds for the backward (time-reversed) sequence
\begin{equation}\label{eq:tildeY}
\widetilde Y_0:=0,\qquad \widetilde Y_n:=M_n\widetilde Y_{n-1}+Q_n,\quad n\ge1,
\end{equation}
for which, for each fixed $n$, one has $\widetilde Y_n\stackrel{d}{=}Y_n$. Indeed, $\widetilde Y_n=\sum_{k=1}^n(\prod_{i=k+1}^nM_i)Q_k$ is obtained from \eqref{eq:Y-sum} by reversing the order of the pairs $(M_i,Q_i)$, and since these pairs are independent and identically distributed, reversing the order does not change the joint law. Associated with the recursion \eqref{eq:tildeY} is the distributional fixed-point equation
\begin{equation}\label{eq:fixedpoint}
Y_\infty\stackrel{d}{=}M_1\,Y_\infty+Q_1\qquad (Y_\infty\text{ independent of }(M_1,Q_1)),
\end{equation}
the convergence to whose unique solution is established below, after the assumptions are stated.

We retain the standard assumptions of \cite{KP2023}:
\begin{itemize}[leftmargin=*,itemsep=2pt]
\item[\textbf{(A1)}] The process $R$ is non-degenerate: $\sigma^2>0$ or $\Pi\not\equiv0$.
\item[\textbf{(A2)}] The cumulant $H(q):=\ln\bbe[e^{-qV_{T_1}}]$ has a unique root $\beta>0$ lying strictly inside $\mathrm{int}(\mathrm{dom}\,H)$.
\item[\textbf{(A3)}] $\bbe[|\xi_1|^\beta]<\infty$ and $\bbe[e^{\eps T_1}]<\infty$ for some $\eps>0$.
\end{itemize}

For the application of the Kesten--Goldie theorem (Theorem 4.1 of \cite{Goldie1991}) an additional non-lattice condition is required:
\begin{itemize}[leftmargin=*,itemsep=2pt]
\item[\textbf{(N)}] The law of the random variable $V_{T_1}$ is non-arithmetic: there exist no $\delta>0$ and $a\in\bbr$ such that $\bbp(V_{T_1}\in a+\delta\bbz)=1$; equivalently, $|\bbe\,e^{itV_{T_1}}|<1$ for all $t\ne0$.
\end{itemize}

\begin{lemma}%\label{lem:N-diff}
If $\sigma^2>0$, then \textup{(N)} holds.
\end{lemma}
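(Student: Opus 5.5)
We will prove the equivalent characteristic-function form of (N): $|\bbe\,e^{itV_{T_1}}|<1$ for every $t\ne0$. The idea is to condition on $T_1$, using the independence of $V$ (a functional of $R$) and $T_1$ (a functional of $P$). Writing $\phi_V(t):=\ln\bbe[e^{itV_1}]$ for the Lévy--Khintchine exponent of $V$, we have $\bbe[e^{itV_s}]=e^{s\phi_V(t)}$ for each deterministic $s\ge0$. Conditioning on $T_1$ then gives
\[
\bbe\,e^{itV_{T_1}}=\bbe\bigl[e^{T_1\phi_V(t)}\bigr],\qquad\text{hence}\qquad
\bigl|\bbe\,e^{itV_{T_1}}\bigr|\le\bbe\bigl[e^{T_1\,\mathrm{Re}\,\phi_V(t)}\bigr].
\]

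Next we compute $\mathrm{Re}\,\phi_V(t)$. The Gaussian part of $V$ is $\sigma W_t$, the same as for $R$, so the Lévy triplet of $V$ has diffusion coefficient $\sigma^2$. Its Lévy measure $\Pi_V$ is the image of $\Pi$ under $x\mapsto\ln(1+x)$. This map is well defined because $\Pi((-\infty,-1])=0$. By the Lévy--Khintchine formula,
\[
\mathrm{Re}\,\phi_V(t)=-\tfrac12\sigma^2t^2-\int_{\bbr}\bigl(1-\cos(ty)\bigr)\,\Pi_V(dy)\le-\tfrac12\sigma^2t^2<0\qquad(t\ne0),
\]
since $1-\cos\ge0$ and $\sigma^2>0$. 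The drift and compensator terms contribute only to the imaginary part and can be ignored.

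Finally we use the assumption $F(\{0\})=0$, which gives $T_1=U_1>0$ a.s. Then $e^{T_1\mathrm{Re}\,\phi_V(t)}<1$ a.s., so its expectation is strictly less than $1$, and (N) follows. The argument does not even need the explicit bound on $\mathrm{Re}\,\phi_V(t)$: strict negativity is enough.

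There is no serious obstacle. The only point needing care is justifying the conditioning step, i.e. that $V$ and $T_1$ are independent, so that $\bbe[e^{itV_{T_1}}\mid T_1]=e^{T_1\phi_V(t)}$. This holds because $R$ and $P$ are independent and $V$ is a measurable functional of $R$.
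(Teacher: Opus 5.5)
Your proof is correct, but it takes a different route from the paper. The paper argues measure-theoretically. For fixed $t>0$, $V_t$ is a nondegenerate Gaussian variable plus an independent remainder, so its law is absolutely continuous. Mixing over $T_1>0$ preserves this, and a coset $a+\delta\bbz$ is Lebesgue-null, so $V_{T_1}$ cannot be concentrated on it. You instead verify the characteristic-function form of (N) directly, via
\[
\bigl|\bbe\,e^{itV_{T_1}}\bigr|\le\bbe\bigl[e^{T_1\mathrm{Re}\,\phi_V(t)}\bigr]
\]
and the L\'evy--Khintchine bound $\mathrm{Re}\,\phi_V(t)\le-\frac12\sigma^2t^2$.

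The paper's route gives the stronger structural fact that $\cL(V_{T_1})$ is absolutely continuous, and it follows the same template as the paper's next lemma on absolutely continuous components of $\Pi_V$.

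Your route is more elementary and quantitative. It gives the explicit bound $|\bbe\,e^{itV_{T_1}}|\le\bbe[e^{-\sigma^2t^2T_1/2}]$. It also uses only $\mathrm{Re}\,\phi_V(t)<0$ for $t\ne0$, so it extends verbatim to $\sigma^2=0$ whenever $\Pi_V$ is not concentrated on any lattice $\delta\bbz$. This holds because $\int(1-\cos(ty))\,\Pi_V(dy)=0$ forces $\Pi_V$ to live on $(2\pi/t)\bbz$. That subsumes the paper's next lemma. It also covers purely atomic L\'evy measures with incommensurable atoms, for which $\cL(V_{T_1})$ need not have any absolutely continuous component, so the paper's argument would not apply.
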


\begin{proof}
If $\sigma^2>0$, then, conditionally on $T_1=t$, the variable $V_{T_1}$ contains the Gaussian component $\sigma W_t-(\sigma^2/2)t$, which has an absolutely continuous density. The unconditional law of $V_{T_1}$, being a mixture, has an absolutely continuous component, which rules out concentration on any coset $a+\delta\bbz$ (a Lebesgue-null set); hence \textup{(N)} holds.
\end{proof}

\begin{lemma}%\label{lem:N-jumps}
If the L\'evy measure $\Pi_V$ has an absolutely continuous component charging an open interval in $\bbr$, then \textup{(N)} holds.
\end{lemma}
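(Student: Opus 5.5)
Since $T_1$ is independent of $V$, conditioning on $T_1$ will reduce the claim to a statement about the marginal laws of the L\'evy process $V$ at fixed times. I will argue by contradiction. Suppose (N) fails, so $|\bbe\,e^{itV_{T_1}}|=1$ for some $t\ne0$. By independence of $T_1$ and $V$,
\[
\bbe\,e^{itV_{T_1}}=\int_{(0,\infty)}e^{s\Phi(t)}\,F(ds),
\]
where $\Phi(t)=\ln\bbe\,e^{itV_1}$ is the L\'evy--Khintchine exponent of $V$. Since $|e^{s\Phi(t)}|=e^{s\,\mathrm{Re}\,\Phi(t)}\le1$, the modulus of this integral can equal one only if $|e^{s\Phi(t)}|=1$ for $F$-a.e.\ $s$. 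Because $F(\{0\})=0$, we may pick such an $s>0$, and then $\mathrm{Re}\,\Phi(t)=0$.

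The next step is to read off $\mathrm{Re}\,\Phi(t)$ from the L\'evy--Khintchine formula:
\[
-\mathrm{Re}\,\Phi(t)=\tfrac12\sigma_V^2t^2+\int_{\bbr}\bigl(1-\cos(tx)\bigr)\,\Pi_V(dx).
\]
Vanishing of this expression forces $\int(1-\cos tx)\,\Pi_V(dx)=0$, so $\Pi_V$ is concentrated on the lattice $(2\pi/t)\bbz$. This set is countable and therefore Lebesgue-null, which contradicts the hypothesis that $\Pi_V$ has an absolutely continuous component charging an open interval: such a component has a density that is positive on a set of positive measure, so it gives positive mass off any null set. Hence $|\bbe\,e^{itV_{T_1}}|<1$ for all $t\ne0$, which is (N) in the equivalent form stated there.

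The step I expect to need care is the passage from "the averaged modulus equals one" to "$\mathrm{Re}\,\Phi(t)=0$". It rests on two facts. First, the integrand has modulus at most one, so equality forces modulus one $F$-almost everywhere. Second, $F$ puts no mass at $0$, so the $F$-a.e.\ set contains some $s>0$; at such an $s$ the relation $e^{s\,\mathrm{Re}\,\Phi(t)}=1$ genuinely gives $\mathrm{Re}\,\Phi(t)=0$. I will also note that the hypothesis is stated for $\Pi_V$, the L\'evy measure of $V$ (the image of $\Pi$ under $x\mapsto\ln(1+x)$), so no translation between $\Pi$ and $\Pi_V$ is required.
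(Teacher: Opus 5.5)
Your proof is correct, but it takes a different route from the paper's.

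\emph{The paper's route.} The paper works with measures. It restricts $\Pi_V$ to an interval $I$ bounded away from $0$. It then splits $V$ into an independent compound Poisson part $J^I$, with jump law $\Pi_V|_I/\Pi_V(I)$, and a remainder. The one-jump term of the Poisson mixture gives $J^I_t$ an absolutely continuous component. Convolution with the law of $V_t-J^I_t$ and mixing over $T_1$ both preserve such a component. Finally, a law with an absolutely continuous part cannot be concentrated on a Lebesgue-null coset $a+\delta\bbz$.

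\emph{Your route.} You work with characteristic functions. The bound $|\bbe\,e^{itV_{T_1}}|\le\int e^{s\,\mathrm{Re}\,\Phi(t)}F(ds)$, together with $F(\{0\})=0$, reduces everything to showing $\mathrm{Re}\,\Phi(t)<0$. You read this off the L\'evy--Khintchine formula.

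\emph{What each buys.}
\begin{itemize}
\item The paper's argument gives a stronger conclusion: $V_t$, and hence $V_{T_1}$, has a nontrivial absolutely continuous component, which is a spread-out property. The cost is the L\'evy--It\^o splitting and the stability facts for absolutely continuous parts.
\item Your argument is shorter and needs a much weaker hypothesis. It only uses that $\sigma_V^2>0$, or that $\Pi_V$ is not carried by any lattice $\delta\bbz$ with $\delta>0$.
\item Since $\sigma_V=\sigma$, your argument also proves the preceding lemma (the case $\sigma^2>0$) at the same time.
\item It covers cases the paper's lemma misses, for example a purely atomic L\'evy measure with two atoms of incommensurable sizes.
\end{itemize}
Your condition is only sufficient, not necessary, since you discard the phase information $e^{is\,\mathrm{Im}\,\Phi(t)}$. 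That does not matter for the statement being proved.
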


\begin{proof}
Let $I\subset\bbr$ be an open interval on which $\Pi_V$ has an absolutely continuous component. The restriction $\Pi_V|_I$ is finite (since $\Pi_V$ is integrable outside any neighborhood of zero by the definition of a L\'evy measure; if $0\in I$, one may pass to a subinterval not containing zero). Consider the decomposition of the L\'evy process $V$ into the sum of two independent processes: a pure-jump process $J^I_t$ with L\'evy measure $\Pi_V|_I$ (a compound Poisson process with finite intensity, the law of whose individual jump has an absolutely continuous component) and the remainder $V^{c}_t:=V_t-J^I_t$. The law of $J^I_t$ for $t>0$ is a shifted mixture over the number of jumps,
\[
\bbp(J^I_t\in\cdot)=\sum_{n\ge0}\frac{(\Pi_V(I)\cdot t)^n}{n!}e^{-\Pi_V(I)\cdot t}\,\nu_I^{*n},
\]
where $\nu_I:=\Pi_V|_I/\Pi_V(I)$ is a probability measure having an absolutely continuous component. Then already $\nu_I^{*1}=\nu_I$ has an absolutely continuous component. Consequently the mixture $\bbp(J^I_t\in\cdot)$ for $t>0$ has an absolutely continuous component. The independent convolution with the law of $V^c_t$ preserves the absolutely continuous component. Hence $V_t$ has an absolutely continuous component for every $t>0$. Averaging over the random moment $T_1$ with a law not concentrated at zero (that is, $F(\{0\})=0$) preserves the presence of an absolutely continuous component in $V_{T_1}$, which implies \textup{(N)}.
\end{proof}

\begin{remark}
Note that the law $F$ of the inter-jump times does not automatically yield the non-arithmeticity of $V_{T_1}$. For example, if $V$ is a standard Poisson process, then $V_t\in\bbz_+$ almost surely for every $t$, and hence $V_{T_1}\in\bbz_+$ for any law of $T_1$, which is lattice with span $1$. Sufficient conditions for (N) must be imposed on the structure of $V$ itself. This example illustrates only the lattice mechanism and does not satisfy (A2): for $V_{T_1}\ge0$ the cumulant $H$ has no positive root.
\end{remark}

\begin{remark}
If condition \textup{(N)} fails, that is, $V_{T_1}$ is lattice with span $\delta>0$, the Kesten--Goldie theorem in its direct form is inapplicable. In this case classical renewal theory (Feller \cite[Chapter XI]{Feller1971}, \cite[Section 2.4]{BDM2016}) yields an asymptotics of the form $u^\beta\Psi(u)=L(u)+o(1)$, where $L$ is a function periodic in $\log u$ with period $\delta$; then $u^\beta\Psi(u)$ oscillates and has no limit, so a relation with $\sim$ is not correct. The lattice case is not considered in the present work.
\end{remark}

Let us verify the conditions for the convergence of the perpetuity. From the convexity of $H$, the equality $H(0)=0$, and the existence of a root $\beta>0$ (assumption (A2)) it follows that $\bbe[\log M_1]=-\bbe[V_{T_1}]=H'(0)<0$. From (A3) we obtain $\bbe[\log_+|Q_1|]<\infty$, since $\bbe[|Q_1|^\beta]<\infty$ by \cite[Lemma 2.1, Corollary 2.2]{EKS2022}.

\begin{lemma}\label{lem:Y-convergence}
Under assumptions \textup{(A1)--(A3)} the forward sum $(Y_n)$ converges almost surely to a finite random variable $Y_\infty^{\mathrm{fwd}}$, while the backward recursion $\{\widetilde Y_n\}$ from \eqref{eq:tildeY} converges in distribution to a random variable $Y_\infty$ satisfying equation \eqref{eq:fixedpoint}. The law $\cL(Y_\infty)$ is the unique stationary measure of \eqref{eq:fixedpoint}, and $Y_\infty^{\mathrm{fwd}}\stackrel{d}{=}Y_\infty$. In what follows $Y_\infty$ denotes a random variable with law $\cL(Y_\infty)$.
\end{lemma}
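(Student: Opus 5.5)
The plan is to follow the classical route for perpetuities (Vervaat, Goldie--Maller), using the two facts already verified before the statement: $\bbe[\log M_1]=H'(0)<0$ and $\bbe[\log_+|Q_1|]<\infty$.

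\emph{Step 1: almost sure convergence of the forward sum.} Write $\Pi_{k}:=\prod_{i=1}^{k}M_i=e^{-V_{T_k}}$. By the strong law of large numbers applied to the i.i.d.\ summands $\log M_i$,
\[
\tfrac1k\log\Pi_k\to\bbe[\log M_1]=:-\gamma<0 \quad\text{a.s.}
\]
Hence $\Pi_{k-1}\le e^{-\gamma k/2}$ for all large $k$, almost surely. Next, $\bbe[\log_+|Q_1|]<\infty$ gives
\[
\sum_k\bbp\bigl(\log_+|Q_k|>\gamma k/4\bigr)<\infty,
\]
so by Borel--Cantelli $|Q_k|\le e^{\gamma k/4}$ eventually, almost surely. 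Therefore the terms of \eqref{eq:Y-sum} are eventually dominated by $e^{-\gamma k/4}$. The series converges absolutely a.s., and $Y_n\to Y_\infty^{\mathrm{fwd}}$, a finite random variable.

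\emph{Step 2: identification of the backward limit.} Since $\widetilde Y_n\stackrel{d}{=}Y_n$ for each fixed $n$ (the reversal argument preceding \eqref{eq:fixedpoint}), and a.s.\ convergence implies convergence in law, $\widetilde Y_n$ converges in distribution to $\cL(Y_\infty^{\mathrm{fwd}})$. Call this limit $Y_\infty$.

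To check \eqref{eq:fixedpoint}, use the shift structure of the forward sum:
\[
Y_\infty^{\mathrm{fwd}}=Q_1+M_1\,Y_\infty^{(1)},\qquad Y_\infty^{(1)}:=\sum_{k\ge2}\Bigl(\prod_{i=2}^{k-1}M_i\Bigr)Q_k .
\]
Here $Y_\infty^{(1)}$ is built from $(M_i,Q_i)_{i\ge2}$. It is therefore independent of $(M_1,Q_1)$ and has the same law as $Y_\infty^{\mathrm{fwd}}$. This gives $Y_\infty\stackrel{d}{=}M_1Y_\infty+Q_1$ with the required independence. Alternatively, one can pass to the limit in $\widetilde Y_n=M_n\widetilde Y_{n-1}+Q_n$, using independence of $\widetilde Y_{n-1}$ from $(M_n,Q_n)$ and continuity of $(m,q,y)\mapsto my+q$.

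\emph{Step 3: uniqueness.} Let $Z$ be any solution of \eqref{eq:fixedpoint}, independent of the i.i.d.\ sequence. Iterating the equation $n$ times, in the forward ordering, gives
\[
Z\stackrel{d}{=}\Pi_nZ+Y_n .
\]
Since $\Pi_n\to0$ a.s.\ by Step 1 and $Z$ is a.s.\ finite, $\Pi_nZ\to0$ a.s. Hence $Z\stackrel{d}{=}\lim Y_n=Y_\infty^{\mathrm{fwd}}$. The same argument applied to $\widetilde Y_n$ started from an arbitrary initial law shows that $\cL(Y_\infty)$ is the unique stationary measure of the Markov chain \eqref{eq:tildeY}.

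The only delicate point is the iteration in Step 3. It requires coupling the i.i.d.\ pairs in the correct order, so that the distributional equality is preserved at each step. This is handled by realizing $Z$ and the pairs on one probability space and applying \eqref{eq:fixedpoint} successively to the innermost copy. Everything else is routine given the two moment facts stated before the lemma.
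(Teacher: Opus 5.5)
Your proof is correct. It shares the paper's skeleton: almost sure convergence of the forward sum, then transfer to $\widetilde Y_n$ via $Y_n\stackrel{d}{=}\widetilde Y_n$. Where the paper cites results, you give elementary arguments.

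\textbf{What the paper does.} It takes the a.s.\ convergence from \cite[Lemma 4.1]{EKS2022}, adding only the remark that the terms ``decay geometrically since $\bbe[\log M_1]<0$''. It obtains the convergence in law of $\widetilde Y_n$ to a solution of \eqref{eq:fixedpoint}, and the uniqueness of the stationary measure, from the Goldie--Maller theorem \cite[Theorem 2.1.1, Corollary 2.1.2]{BDM2016} and \cite{Vervaat1979}. It uses $Y_n\stackrel{d}{=}\widetilde Y_n$ only to identify the two limits.

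\textbf{What you do instead.} You make the geometric-decay remark rigorous: the SLLN controls $\log\Pi_k$, and Borel--Cantelli controls $Q_k$. The latter is exactly where $\bbe[\log_+|Q_1|]<\infty$ is needed, which the paper's one-line remark leaves implicit. You then deduce the convergence in law of $\widetilde Y_n$ directly from the forward limit. You check \eqref{eq:fixedpoint} by the shift decomposition, and you prove uniqueness by the iteration $Z\stackrel{d}{=}\Pi_nZ+Y_n$ with $\Pi_n\to0$.

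\textbf{Comparison.} Your route gives a self-contained proof. It also shows that the non-degeneracy condition mentioned in the paper plays no role in existence and uniqueness here. The paper's citation gains brevity and the full Goldie--Maller generality, which is not needed for this lemma.

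\textbf{A minor point.} The SLLN step only requires $\bbe[(\log M_1)^+]=\bbe[V_{T_1}^-]<\infty$, and this follows from $H(q)<\infty$ for some $q>0$. If $\bbe[\log M_1]$ were $-\infty$, the extended SLLN gives $\tfrac1k\log\Pi_k\to-\infty$. Your argument then runs unchanged with any positive constant in place of $\gamma$.
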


\begin{proof}
The almost sure convergence $Y_n\to Y_\infty^{\mathrm{fwd}}$ is established in \cite[Lemma 4.1]{EKS2022}. It can also be seen directly: the terms of the series $\sum_k(\prod_{i<k}M_i)Q_k$ decay geometrically, since $\bbe[\log M_1]<0$. The convergence in distribution of the recursion $\widetilde Y_n$ to a solution of \eqref{eq:fixedpoint} and the uniqueness of the stationary measure, under the conditions $\bbe[\log|M_1|]<0$, $\bbe[\log_+|Q_1|]<\infty$ and non-degeneracy, are provided by the Goldie--Maller theorem \cite[Theorem 2.1.1, Corollary 2.1.2]{BDM2016} and \cite{Vervaat1979}. The recursion $\widetilde Y_n$ itself does not converge almost surely, since the difference $\widetilde Y_n-M_n\widetilde Y_{n-1}=Q_n$ does not tend to zero; it is precisely the forward sum $Y_n$ that converges almost surely. Finally, the equality of laws $Y_\infty^{\mathrm{fwd}}\stackrel{d}{=}Y_\infty$ follows from the equality $Y_n\stackrel{d}{=}\widetilde Y_n$ established above for each $n$ and the passage to the limit.
\end{proof}

All asymptotic statements in the work depend only on the law $\cL(Y_\infty)$, so in what follows we work with $Y_\infty$ as a typical representative of this law.

\section{Main Results}\label{sec:main}

The main result of the present work is the following theorem.

\begin{theorem}\label{thm:main}
Under assumptions \textup{(A1)}, \textup{(A2)}, \textup{(A3)} and \textup{(N)},
\begin{equation}\label{eq:main}
\lim_{u\to\infty}u^\beta\Psi(u)=C^*\in(0,\infty),
\end{equation}
where the constant $C^*$ is given by the limit $C^*=\lim_{u\to\infty}u^\beta\bbp(M^*>u)$ and satisfies $C^*\ge C_+^Y$, where $C_+^Y$ is the integral Goldie constant \eqref{eq:C-plus-Y} of the stationary measure $\cL(Y_\infty)$.
\end{theorem}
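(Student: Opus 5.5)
We follow the three-step route announced in the introduction. \emph{First}, the Kesten--Goldie theorem is applied to the fixed-point equation \eqref{eq:fixedpoint}. Lemma \ref{lem:Y-convergence} provides the stationary law $\cL(Y_\infty)$. The hypotheses of \cite[Theorem 2.4.4]{BDM2016} are then checked one by one:
\begin{itemize}[leftmargin=*,itemsep=2pt]
\item $\bbe[M_1^\beta]=e^{H(\beta)}=1$ holds by (A2);
\item $\bbe[M_1^\beta\log_+M_1]<\infty$ holds because $\beta\in\mathrm{int}(\mathrm{dom}\,H)$;
\item the law of $\log M_1=-V_{T_1}$ is non-arithmetic, which is exactly (N);
\item $\bbe[|Q_1|^\beta]<\infty$ follows from (A3) via \cite[Lemma 2.1, Corollary 2.2]{EKS2022};
\item $Q_1>0$ a.s., since $Q_1=-c\int e^{-(V_{s-}-V_{T_0})}ds-e^{-(V_{T_1-}-V_{T_0})}\xi_1$, and the jump term dominates. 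Even without this, $M_1,Q_1$ nonnegative-type structure rules out the degenerate case $Y_\infty\equiv$ const.
\end{itemize}
This gives $\bbp(Y_\infty>u)\sim C_+^Yu^{-\beta}$ with $C_+^Y=\bbe[(M_1Y_\infty+Q_1)_+^\beta-(M_1Y_\infty)_+^\beta]/(\beta\,\bbe[M_1^\beta\log M_1])$. To see $C_+^Y>0$, note that $Q_1$ has unbounded support, because $-\xi_1>0$ is multiplied by the unbounded factor $e^{-V_{T_1-}}$; Goldie's positivity criterion, namely that $Y_\infty$ has unbounded support to the right, then applies.

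\emph{Second}, we pass from $Y_\infty$ to $M^*=\sup_{n\ge0}Y_n$. Writing $Y_n=Q_1+M_1Y_{n-1}'$, where $Y'$ is the same sum built from the shifted pairs, and using $Y_0=0$, we obtain the distributional identity
\[
M^*\stackrel{d}{=}\bigl(M_1M^{*\prime}+Q_1\bigr)_+\vee 0=\max\bigl(0,\,M_1M^{*\prime}+Q_1\bigr),
\]
with $M^{*\prime}\stackrel{d}{=}M^*$ independent of $(M_1,Q_1)$. This is precisely Goldie's equation $M\stackrel{d}{=}(AM+B)_+$ from \cite[Section 6]{Goldie1991}. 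Goldie's implicit renewal theorem (Theorem 2.3 there) then yields $u^\beta\bbp(M^*>u)\to C^*$ with
\[
C^*=\frac{\bbe\bigl[((M_1M^*+Q_1)_+)^\beta-(M_1M^*)_+^\beta\bigr]}{\beta\,\bbe[M_1^\beta\log M_1]}.
\]
Finiteness of $C^*$ requires the moment condition $\bbe|(AM+B)_+^\beta-(AM)_+^\beta|<\infty$. We verify it with the standard elementary inequality bounding this difference by $\max(1,\beta)\,(|Q_1|^\beta+\beta|Q_1|\,|M_1M^*|^{\beta-1})$ when $\beta>1$, and by $|Q_1|^\beta$ when $\beta\le1$. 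The case $\beta>1$ is handled as in Goldie via smoothing: one passes to $\int_0^u t^{\beta}\bbp(M^*>t)\,dt$, after which the finiteness reduces to $\bbe|Q_1|^\beta<\infty$. The conclusion $C^*<\infty$ also follows independently from the upper bound in \eqref{eq:liminflimsup}. Finally, $C^*\ge C_+^Y$ because $M^*\ge Y_\infty^{\mathrm{fwd}}\stackrel{d}{=}Y_\infty$ almost surely, which is pathwise monotonicity combined with Lemma \ref{lem:Y-convergence}. In particular $C^*>0$.

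\emph{Third}, we treat the non-strict inequality. Since $\Psi(u)=\bbp(M^*\ge u)$, for every $\eps\in(0,u)$ we have
\[
\bbp(M^*>u)\le\Psi(u)\le\bbp(M^*>u-\eps).
\]
Multiplying by $u^\beta$ and using $(u/(u-\eps))^\beta\to1$ gives $\lim u^\beta\Psi(u)=C^*$.

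The main obstacle is the second step: checking rigorously that $M^*$ satisfies the one-step identity in law, and that Goldie's hypotheses for the maximum equation hold, in particular the integrability defining $C^*$ when $\beta>1$. There I would rely directly on Goldie's Theorem 2.3 with the finiteness condition verified through the elementary inequality and $\bbe|Q_1|^\beta<\infty$.
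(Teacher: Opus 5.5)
\textbf{Gap: positivity of $C^*$.} You obtain $C^*>0$ from $C_+^Y>0$, and none of the links in that argument holds.
\begin{itemize}
\item $Q_1>0$ a.s.\ is false: the premium part $-c\int_0^{T_1}e^{-V_s}\,ds$ is negative and nothing forces the claim term to dominate. So this claim cannot be used for non-degeneracy either.
\item $Q_1$ need not have unbounded support: take bounded claims, bounded $T_1$, and $V$ bounded below on bounded intervals.
\item An unbounded upper support of $Q_1$ would not make the support of $\cL(Y_\infty)$ unbounded above anyway. For a perpetuity $Y=AY'+B$ with $B=A-1-\eta$, $\eta\ge0$, one gets $Y+1=-\sum_k(\prod_{i<k}A_i)\eta_k\le0$, although $B$ is unbounded above whenever $A$ is.
\end{itemize}
The paper does not argue from the shape of $Q_1$. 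It imports the unboundedness of the support from \cite[Theorem 3.1]{KLP2026} via \cite[Proposition 2.5.4]{BDM2016}.

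You should also know that this step cannot be derived from (A1)--(A3) and (N) alone. Take $U_i\equiv1$, $\xi_i\equiv-1$, $c=2$, and $V_t=-t+S_t$, where $S$ is a rate-one compound Poisson process with $U[1,2]$-distributed jumps $J_k$. Then $H(q)=q+\bbe e^{-qJ_1}-1$ is finite everywhere with $H'(0)=-1/2$, and $\Pi_V$ is absolutely continuous, so all hypotheses hold. But since $S$ is nondecreasing,
\[
Q_1=e^{-V_{1-}}-2\int_0^1e^{-V_s}\,ds\le e^{-S_{1-}}\bigl(e-2(e-1)\bigr)<0 .
\]
Hence $Y_n<0$ for all $n\ge1$, $M^*=0$, and $\Psi\equiv0$ on $(0,\infty)$. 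Here even $\bbp(Q_1>0)>0$ fails, which is the premise on which the paper's citation rests. So positivity has to come from a support result, or from the lower half of \eqref{eq:liminflimsup}, under an explicitly stated additional non-degeneracy hypothesis. With that input, the rest of your argument goes through.

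\textbf{Comparison with the paper.} Apart from this, your route is the paper's, and your second step is cleaner. The first-step identity $M^*=\max(0,\,Q_1+M_1M^{*\prime})$ holds pathwise. So $\cL(M^*)$ solves $M\stackrel{d}{=}(M_1M+Q_1)_+$ without any appeal to uniqueness or to the iterates $Z_n$, and Goldie's Theorem 2.3 applies to it directly. The paper instead quotes the Letac-model result, Theorem 6.2.

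\textbf{Repair in step two.} The integrability hypothesis of Theorem 2.3 is needed for the \emph{existence} of the limit, not merely for $C^*<\infty$; the upper bound in \eqref{eq:liminflimsup} gives only the latter. For $\beta>1$ this is not a matter of smoothing. You need $\bbe[(M^*)^{\beta-1}]<\infty$, which follows from
\begin{itemize}
\item the bound $M^*\le\sum_k(\prod_{i<k}M_i)|Q_k|$,
\item $\bbe M_1^{\beta-1}=e^{H(\beta-1)}<1$,
\item $\bbe|Q_1|^{\beta-1}<\infty$.
\end{itemize}
Then Hölder's inequality $\bbe[|Q_1|M_1^{\beta-1}]\le(\bbe|Q_1|^\beta)^{1/\beta}$ controls the cross term in your elementary inequality.
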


Let us explain the logic of the proof. The reduction (Lemma \ref{lem:Y-structure}) reduces the ruin probability to the tail of the supremum of the chain $(Y_n)$; the Kesten--Goldie theorem yields the power-law tail of the stationary measure $\cL(Y_\infty)$; Goldie's result for the maximum equation transfers the power-law tail to the supremum $M^*$. Finally, the two-sided bound and a limiting argument give the exact equality for $\Psi(u)=\bbp(M^*\ge u)$. Technically, the proof consists of the following lemmas:
\begin{itemize}[leftmargin=*,itemsep=2pt]
\item Lemma \ref{lem:moments-detailed}: moment identities for the pair $(M_1,Q_1)$;
\item Lemma \ref{lem:Y-inf-asymptotic}: the asymptotics of the stationary measure $\bbp(Y_\infty>u)\sim C_+^Y u^{-\beta}$ by the Kesten--Goldie theorem \cite[Theorem 2.4.4]{BDM2016};
\item Lemma \ref{lem:sup-asymptotic}: the asymptotics of the supremum $\bbp(M^*>u)\sim C^* u^{-\beta}$ by \cite[Theorem 6.2]{Goldie1991};
\item the proof of Theorem \ref{thm:main} itself, with the passage from $\bbp(M^*>u)$ to $\bbp(M^*\ge u)$ via the two-sided bound and a limiting argument.
\end{itemize}

As a meaningful comment on the exponent $\beta$, we note the following observation, which gives it a transparent L\'evy interpretation.

\begin{remark}[L\'evy interpretation of the exponent $\beta$]
Condition (A2), $H(\beta)=0$, has an equivalent form in terms of the Laplace exponent $\psi_V$ of the L\'evy process $V$ itself. Since $V$ is a L\'evy process and $T_1$ is independent of $V$,
\[
\bbe[e^{-\beta V_{T_1}}]=\bbe\bigl[\bbe[e^{-\beta V_t}]\big|_{t=T_1}\bigr]=\bbe[e^{T_1\psi_V(\beta)}].
\]
The identity $\bbe[e^{T_1\psi_V(\beta)}]=1$, together with $T_1>0$ a.s. and the strict monotonicity of the function $s\mapsto\bbe[e^{sT_1}]$, implies $\psi_V(\beta)=0$. In substance: the exponent $\beta$ is the Cram\'er root of the cumulant $\psi_V$ of the process $V$ itself, rather than of the random walk $V_{T_n}$. The distribution of the inter-jump times affects only the value of the constant $C^*$ in \eqref{eq:main}, not the exponent.
\end{remark}

\begin{lemma}\label{lem:moments-detailed}
Under assumptions \textup{(A1)--(A3)} the following hold:
\begin{itemize}[leftmargin=*,itemsep=2pt]
\item[\textup{(i)}] $\bbe[M_1^\beta]=1$;
\item[\textup{(ii)}] $\bbe[M_1^\beta\,|\log M_1|]<\infty$;
\item[\textup{(iii)}] $m:=\bbe[M_1^\beta\log M_1]=H'(\beta)>0$;
\item[\textup{(iv)}] $\bbe[|Q_1|^\beta]<\infty$.
\end{itemize}
\end{lemma}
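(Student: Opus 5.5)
Since $M_1=e^{-V_{T_1}}$, every statement about $M_1$ is a statement about the function $H(q)=\ln\bbe[e^{-qV_{T_1}}]=\ln\bbe[M_1^q]$. Items (i)--(iii) therefore reduce to properties of $H$ near $\beta$, and item (iv) is the moment bound of \cite{EKS2022}.

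For (i), write $\bbe[M_1^\beta]=\bbe[e^{-\beta V_{T_1}}]=e^{H(\beta)}$. This equals $1$ because $H(\beta)=0$ by (A2).

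For (ii), use that (A2) places $\beta$ in $\mathrm{int}(\mathrm{dom}\,H)$. Choose $\delta>0$ with $[\beta-\delta,\beta+\delta]\subset\mathrm{dom}\,H$. The elementary inequality $x^\beta|\log x|\le \delta^{-1}(x^{\beta+\delta}+x^{\beta-\delta})$ holds for $x>0$. It follows from $|\log x|\le\delta^{-1}x^{\delta}$ for $x\ge1$ and $|\log x|\le \delta^{-1}x^{-\delta}$ for $x<1$. Applying it with $x=M_1$ gives
\[
\bbe[M_1^\beta|\log M_1|]\le\delta^{-1}\bigl(e^{H(\beta+\delta)}+e^{H(\beta-\delta)}\bigr)<\infty .
\]

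For (iii), the same domination shows that $q\mapsto\bbe[M_1^q]$ is differentiable at $\beta$. Indeed, by the mean value theorem, $|(x^{q}-x^\beta)/(q-\beta)|\le x^{\beta}|\log x|\max(x^{\delta},x^{-\delta})$ for $|q-\beta|<\delta/2$. After shrinking $\delta$ to $\delta/2$ in the choice above, this bound is integrable. Dominated convergence then gives $\frac{d}{dq}\bbe[M_1^q]\big|_{q=\beta}=\bbe[M_1^\beta\log M_1]$. Since $H=\ln\bbe[M_1^q]$ and $\bbe[M_1^\beta]=1$ by (i), we get $H'(\beta)=m$.

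Positivity of $m$ is the one nontrivial point, and it comes from convexity. $H$ is convex by Hölder's inequality, with $H(0)=H(\beta)=0$. By (A1) and the independence of $T_1$ from $V$, the variable $V_{T_1}$ is non-degenerate, so $H$ is strictly convex; alternatively this follows from (N). Strict convexity and $H(0)=H(\beta)=0$ imply $H<0$ on $(0,\beta)$. Hence $H'(\beta)\ge (H(\beta)-H(\beta/2))/(\beta/2)>0$. The same convexity argument yields $H'(0)<0$, as already used before Lemma \ref{lem:Y-convergence}.

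For (iv), I will invoke \cite[Lemma 2.1, Corollary 2.2]{EKS2022}, which gives $\bbe[|Q_1|^\beta]<\infty$ under (A3) together with $\beta\in\mathrm{int}(\mathrm{dom}\,H)$. As a sanity check, in the non-life case $Q_1$ splits into two terms:
\[
Q_1=-c\int_0^{T_1}e^{-V_s}ds-e^{-V_{T_1-}}\xi_1 .
\]
The jump term is bounded using $\bbe|\xi_1|^\beta<\infty$ and the independence of $\xi_1$ from $V$ and $T_1$. The integral term is bounded by $T_1\sup_{s\le T_1}e^{-V_s}$. Its $\beta$-th moment is controlled by a Doob-type maximal inequality for the martingale $e^{-qV_s-s\psi_V(q)}$ with $q$ slightly above $\beta$, combined with $\bbe e^{\eps T_1}<\infty$.

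The main obstacle is this moment control in (iv), which is why I cite it rather than reprove it.
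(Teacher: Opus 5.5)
Your proposal is correct and follows the paper's proof essentially step by step. It uses the same identity $\bbe[M_1^\beta]=e^{H(\beta)}$ for (i), the same elementary inequality for (ii) (written in $x=M_1$ rather than $V_{T_1}$), the same identification $m=H'(\beta)$ with positivity from strict convexity and $H(0)=H(\beta)=0$ for (iii), and the same citation of \cite[Lemma 2.1, Corollary 2.2]{EKS2022} for (iv); the only differences are minor refinements, namely your explicit dominated-convergence justification for differentiating under the expectation (the paper only says this is ``justified by (ii)'') and your secant bound $H'(\beta)\ge -2H(\beta/2)/\beta>0$ in place of Rolle's theorem.
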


\begin{proof}
(i) is a reformulation of the definition of $\beta$:
\[
\bbe[M_1^\beta]=\bbe[e^{-\beta V_{T_1}}]=e^{H(\beta)}=1.
\]
(ii) follows from $\beta\in\mathrm{int}(\mathrm{dom}\,H)$ (assumption (A2)): there exists $\delta>0$ such that $\beta\pm\delta\in\mathrm{dom}\,H$. Then the elementary inequality
\begin{equation}\label{eq:tail-split}
|x|\,e^{-\beta x}\le\frac1\delta\bigl(e^{-(\beta-\delta)x}+e^{-(\beta+\delta)x}\bigr),\qquad x\in\bbr,
\end{equation}
holds, being equivalent (after division by $e^{-\beta x}$) to $|\delta x|/2\le\cosh(\delta x)$. Applying \eqref{eq:tail-split} to $V_{T_1}$ and taking expectations,
\begin{multline*}
\bbe[|V_{T_1}|\,e^{-\beta V_{T_1}}]\le\frac1\delta\bigl(\bbe[e^{-(\beta-\delta)V_{T_1}}]+\bbe[e^{-(\beta+\delta)V_{T_1}}]\bigr)=
\\
=\frac1\delta\bigl(e^{H(\beta-\delta)}+e^{H(\beta+\delta)}\bigr)<\infty.
\end{multline*}
Since $|\log M_1|=|V_{T_1}|$ and $M_1^\beta=e^{-\beta V_{T_1}}$, this gives $\bbe[M_1^\beta\,|\log M_1|]<\infty$.

(iii) is a direct computation (differentiation under the integral sign is justified by (ii)):
\[
m=\bbe[M_1^\beta\log M_1]=\bbe[e^{-\beta V_{T_1}}\cdot(-V_{T_1})]=-\bbe[V_{T_1}\,e^{-\beta V_{T_1}}]=H'(\beta).
\]
For the positivity $H'(\beta)>0$: $H$ is strictly convex on the interior of its domain (strict convexity follows from the non-degeneracy of $V_{T_1}$, ensured by (A1)), and $H(0)=H(\beta)=0$; by Rolle's theorem there exists $\xi\in(0,\beta)$ with $H'(\xi)=0$, and from the strict increase of $H'$ we obtain $H'(\beta)>H'(\xi)=0$.

(iv) is the content of Lemma 2.1 and Corollary 2.2 of \cite{EKS2022}.
\end{proof}

\begin{lemma}\label{lem:Y-inf-asymptotic}
Under assumptions \textup{(A1)--(A3)} and \textup{(N)} there exists $C_+^Y\in(0,\infty)$ such that
\[
\bbp(Y_\infty>u)\sim C_+^Y\,u^{-\beta}\qquad\text{as }u\to\infty,
\]
and
\begin{equation}\label{eq:C-plus-Y}
C_+^Y=\frac1{\beta\,H'(\beta)}\,\bbe\bigl[((M_1Y_\infty+Q_1)_+)^\beta-((M_1Y_\infty)_+)^\beta\bigr],
\end{equation}
where $Y_\infty$ on the right-hand side of \eqref{eq:C-plus-Y} is independent of $(M_1,Q_1)$.
\end{lemma}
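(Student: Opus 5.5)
We apply Goldie's implicit renewal theorem in its one-dimensional Kesten--Goldie form \cite[Theorem 2.4.4]{BDM2016} (equivalently \cite[Theorem 4.1, Case 2.3]{Goldie1991}) to the fixed-point equation \eqref{eq:fixedpoint}. The plan is to verify its hypotheses one by one.

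\textbf{Step 1: hypotheses on $(M_1,Q_1)$.} Note that $M_1=e^{-V_{T_1}}>0$ a.s. Condition (N) says exactly that the law of $\log M_1=-V_{T_1}$ is non-arithmetic. Lemma \ref{lem:moments-detailed}(i)--(iv) gives the remaining hypotheses: $\bbe[M_1^\beta]=1$, $\bbe[M_1^\beta\log_+M_1]<\infty$, $\bbe[|Q_1|^\beta]<\infty$. It also gives $m=H'(\beta)\in(0,\infty)$, which is the normalizing constant appearing in the theorem. By Lemma \ref{lem:Y-convergence}, $Y_\infty$ is the unique solution of \eqref{eq:fixedpoint}, independent of $(M_1,Q_1)$.

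\textbf{Step 2: the tail and formula \eqref{eq:C-plus-Y}.} Goldie's theorem then yields $u^\beta\bbp(Y_\infty>u)\to C_+^Y$ with
\[
C_+^Y=\frac{1}{\beta m}\,\bbe\bigl[((M_1Y_\infty+Q_1)_+)^\beta-((M_1Y_\infty)_+)^\beta\bigr].
\]
With $m=H'(\beta)$ this is precisely \eqref{eq:C-plus-Y}. Finiteness of the expectation is part of the theorem. It can also be checked directly: for $\beta\le1$ use $|x_+^\beta-y_+^\beta|\le|x-y|^\beta$, which bounds the integrand by $|Q_1|^\beta$; for $\beta>1$ use the mean-value bound $\beta|Q_1|(|M_1Y_\infty|+|Q_1|)^{\beta-1}$. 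In the latter case one needs $\bbe|Y_\infty|^{\beta-1}<\infty$ (indeed $\bbe|Y_\infty|^{s}<\infty$ for $s<\beta$, since $\bbe M_1^s<1$ by convexity), together with independence and Hölder's inequality.

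\textbf{Step 3: strict positivity $C_+^Y>0$.} This is the main obstacle. Goldie's theorem only gives $C_+^Y\ge0$, with positivity under a non-degeneracy condition, so I would avoid computing with the formula and use the sign structure instead. In the non-life case $Q_1=-\int_0^{T_1}e^{-V_{s-}}\,dP_s$ is the sum of $-c\int_0^{T_1}e^{-V_s}ds<0$ and $-e^{-V_{T_1-}}\xi_1>0$. Hence $Q_1$ takes positive values with positive probability, and is unbounded above because $\xi_1$ has unbounded support or $V$ fluctuates. Then:
\begin{itemize}[leftmargin=*,itemsep=2pt]
\item Since $Y_n=\sum_k(\prod_{i<k}M_i)Q_k$, on the event $\{Q_1>0\}$ we have $Y_\infty^{\mathrm{fwd}}=Q_1+M_1Y'$ with $Y'\stackrel{d}{=}Y_\infty$ independent of $(M_1,Q_1)$. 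Iterating shows that $\bbp(Y_\infty>u)>0$ for every $u$.
\item More usefully, I would invoke the known criterion (\cite[Theorem 2.4.4]{BDM2016}, or Guivarc'h--Le Page / \cite[Proposition 2.5.4]{BDM2016}) that $C_+^Y>0$ iff $\bbp(M_1x+Q_1=x)<1$ for all $x$ and the support of $Y_\infty$ is unbounded above. The first holds because $Q_1$ has a continuous component or by (A1); the second follows from the previous item.
\item Alternatively, the lower bound in \eqref{eq:liminflimsup} from \cite{EKS2022,KLP2026}, combined with $\bbp(M^*\ge u)\le\ldots$, is not in the right direction, so I would use the positivity criterion.
\end{itemize}
Finally, $C_+^Y<\infty$ follows from the finiteness established in Step 2.
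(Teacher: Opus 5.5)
\paragraph{Comparison with the paper.} Your Steps 1--2 follow the paper's route: Theorem 2.4.4 of \cite{BDM2016} gives the tail and the constant, and $C_+^Y>0$ is then reduced to the support of $\cL(Y_\infty)$ being unbounded above. One small slip: $\bbp(M_1x+Q_1=x)<1$ is a hypothesis of Theorem 2.4.4 itself. It belongs in Step 1 with an actual argument. ``$Q_1$ has a continuous component or by (A1)'' is not one.

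\paragraph{The gap in Step 3.} In the non-life case
\[
Q_1=-c\int_0^{T_1}e^{-V_s}\,ds+e^{-V_{T_1-}}|\xi_1|
\]
is a \emph{difference} of two positive quantities. Neither $\bbp(Q_1>0)>0$ nor ``$Q_1$ unbounded above'' follows from the hypotheses: $F_\xi$ may be a point mass, $T_1$ may be deterministic, and $V$ may be able to move down only through its drift. ``Iterating'' does not settle it either. The support of $\cL(Y_\infty)$ is bounded above iff $Q_1\le x_0(1-M_1)$ a.s.\ for some $x_0\in\bbr$, and $\bbp(Q_1>0)>0$ does not exclude this (take $x_0>0$, with $Q_1>0$ only where $M_1<1$).

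\paragraph{The conclusion itself can fail.} Step 3 cannot be repaired under (A1)--(A3), (N) alone. Take $\sigma=0$ and $V_t=-t+J_t$, where $J$ is compound Poisson with rate $2$ and $\mathrm{Exp}(1)$ jumps. Take $T_1\equiv1$, $\xi_1\equiv-1$ and $c\ge e/(e-1)$.
\begin{itemize}
\item Hypotheses: $H(q)=q(q-1)/(1+q)$ on $\mathrm{dom}\,H=(-1,\infty)$, so $\beta=1$. Conditions (A1)--(A3) hold, and (N) holds because the L\'evy measure of $V$ is absolutely continuous.
\item Sign of $Q_1$: $V$ decreases only through its drift, so $V_s\le V_{1-}+(1-s)$ for $s<1$. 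Hence $\int_0^1e^{-V_s}\,ds\ge(1-e^{-1})e^{-V_{1-}}$, and therefore $Q_1\le e^{-V_{1-}}\bigl(1-c(1-e^{-1})\bigr)\le0$ a.s.
\item Consequence: $Y_n\le0$ for all $n$, so $Y_\infty\le0$ a.s.\ and $C_+^Y=0$. In fact $\Psi(u)=0$ for every $u>0$.
\end{itemize}

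\paragraph{The same flaw is in the paper.} The paper's proof rests on the same weak point. It asserts that $c>0$ and $\xi_i<0$ imply $\bbp(Q_1>0)>0$, and the example above refutes this.

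\paragraph{What would fix it.} The positivity claim needs an extra hypothesis that rules out $Q_1\le x_0(1-M_1)$ a.s.\ for every $x_0$. Either $\sigma^2>0$ or $F_\xi$ with unbounded support is enough. Under such a hypothesis you can check directly that no such $x_0$ exists, and the support criterion you cite then gives $C_+^Y>0$.
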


\begin{proof}
We apply the Kesten--Goldie theorem in its one-dimensional form \cite[Theorem 2.4.4]{BDM2016} (going back to Theorem 5 of \cite{Kesten1973} and Theorem 4.1 of \cite{Goldie1991}) to the stationary measure $\cL(Y_\infty)$ of equation \eqref{eq:fixedpoint}. The conditions of \cite[Theorem 2.4.4]{BDM2016} are:
\begin{itemize}[leftmargin=*,itemsep=2pt]
\item $M_1>0$ almost surely ($M_1=e^{-V_{T_1}}$);
\item $\log M_1=-V_{T_1}$ is non-arithmetic---condition (N);
\item $\bbe[M_1^\beta]=1$, $\bbe[|Q_1|^\beta]<\infty$, $\bbe[M_1^\beta\log_+M_1]<\infty$---Lemma \ref{lem:moments-detailed};
\item $\bbp(M_1x+Q_1=x)<1$ for every $x\in\bbr$. This is verified directly: conditionally on $V_{T_1}$ (hence on $M_1$), the variable $Q_1$ remains random because of the term $-e^{-V_{T_1-}}\xi_1$, into which the non-degenerate random variable $\xi_1$ enters ($F_\xi(\{0\})=0$); therefore $\bbp(Q_1=x(1-M_1))<1$ for every $x$.
\end{itemize}
By \cite[Theorem 2.4.4]{BDM2016}, $\bbp(Y_\infty>u)\sim C_+^Y u^{-\beta}$ with the explicit formula \eqref{eq:C-plus-Y}. The positivity $C_+^Y>0$ is ensured by the unboundedness of the support of $\cL(Y_\infty)$ from above. By \cite[Theorem 2.5.5, Theorem 2.4.6]{BDM2016}, the support of the stationary measure is a half-line or the whole real line, and if it is unbounded from above, then $C_+^Y>0$. The unboundedness from above of the support for the model under consideration is established in \cite[Theorem 3.1]{KLP2026}, relying on Proposition 2.5.4 of \cite{BDM2016}. Formally, \cite{KLP2026} considers the mixed model, but the non-life case is a particular case of it: the conditions $c>0$ and $\xi_i<0$ imply $\bbp(Q_1>0)>0$, which ensures the applicability of Proposition 2.5.4.
\end{proof}

\begin{lemma}\label{lem:sup-asymptotic}
Under assumptions \textup{(A1)--(A3)} and \textup{(N)} there exists $C^*\in[C_+^Y,\infty)$ such that
\[
\bbp(M^*>u)\sim C^*\,u^{-\beta}\qquad\text{as }u\to\infty.
\]
\end{lemma}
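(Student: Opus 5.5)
The plan is to realize $M^*$ in law as the solution of Goldie's maximum equation and then invoke \cite[Theorem 6.2]{Goldie1991}. From \eqref{eq:Y-sum}, separate the first step:
\[
M^*=\sup_{n\ge0}Y_n=\max\Bigl(0,\;Q_1+M_1\sup_{n\ge1}\sum_{k=2}^{n}\Bigl(\prod_{i=2}^{k-1}M_i\Bigr)Q_k\Bigr),
\]
and the inner supremum is a copy of $M^*$ built from the shifted pairs $(M_i,Q_i)_{i\ge2}$, so it is independent of $(M_1,Q_1)$. Hence
\[
M^*\stackrel{d}{=}\max\bigl(0,\,M_1M^*+Q_1\bigr)=(M_1M^*+Q_1)_+ ,
\]
with $M^*$ independent of $(M_1,Q_1)$. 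By Lemma \ref{lem:Y-convergence}, $Y_n$ converges a.s., so $M^*<\infty$ a.s. This makes $\cL(M^*)$ a fixed point of the random map $x\mapsto(M_1x+Q_1)_+$. Uniqueness of this fixed point follows from contractivity in the sense $\bbe\log M_1<0$, for example via the iterated-function-system argument in \cite{Goldie1991}.

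Next check the hypotheses of Goldie's implicit renewal theorem for this map. We have $M_1>0$ a.s., $\log M_1$ is non-arithmetic by (N), and Lemma \ref{lem:moments-detailed} gives $\bbe M_1^\beta=1$, $m=H'(\beta)\in(0,\infty)$ and $\bbe|Q_1|^\beta<\infty$. The key integrability condition in \cite[Theorem 6.2]{Goldie1991} is
\[
\int_0^\infty\bigl|\bbp((M_1M^*+Q_1)_+>t)-\bbp(M_1M^*>t)\bigr|t^{\beta-1}\,dt<\infty .
\]
This follows from the elementary bound $|(a+b)_+^\beta-(a)_+^\beta|\le C(|b|^\beta+|b|\,|a|^{\beta-1})$ when $\beta>1$, and $\le|b|^\beta$ when $\beta\le1$, exactly as in Goldie's own Section 6. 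The case $\beta>1$ needs $\bbe (M^*)^{\beta-1}<\infty$ and $\bbe M_1^{\beta-1}<\infty$. The former follows from $\bbe\sup_n|Y_n|^{\gamma}<\infty$ for $\gamma<\beta$, by Minkowski or by the triangle inequality for $\gamma\le1$, since $\bbe M_1^\gamma<1$ for $\gamma\in(0,\beta)$. The theorem then yields $\bbp(M^*>u)\sim C^*u^{-\beta}$ with
\[
C^*=\frac1{\beta m}\bbe\bigl[((M_1M^*+Q_1)_+)^\beta-(M_1M^*)^\beta\bigr]\in[0,\infty).
\]

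To get $C^*\ge C_+^Y$, use the pointwise domination $M^*\ge Y_\infty^{\mathrm{fwd}}$, which holds because $M^*\ge Y_n$ for all $n$ and $Y_n\to Y^{\mathrm{fwd}}_\infty$. Combined with Lemma \ref{lem:Y-convergence}, this gives $\bbp(M^*>u)\ge\bbp(Y_\infty>u)$. Multiplying by $u^\beta$ and applying Lemma \ref{lem:Y-inf-asymptotic} gives $C^*\ge C_+^Y>0$, so positivity comes for free.

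I expect the main obstacle to be the exact matching with the hypotheses of Goldie's Theorem 6.2, namely the moment bound on $M^*$ of order $\beta-1$ when $\beta>1$, together with the independence bookkeeping in the maximum decomposition. I would first try the Minkowski estimate $\|M^*\|_\gamma\le\sum_k\|Q_1\|_\gamma(\bbe M_1^\gamma)^{(k-1)/\gamma}<\infty$ for $\gamma=\max(\beta-1,\text{small})<\beta$. If that route fails, the fallback is the cruder bound $M^*\le\sum_k(\prod_{i<k}M_i)(Q_k)_+$, which is again a perpetuity and has moments of all orders below $\beta$.
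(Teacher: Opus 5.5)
Your proposal is correct and has the same skeleton as the paper's proof. Both rest on Goldie's result for $M\stackrel{d}{=}(M_1M+Q_1)_+$ (Letac's model with $L=-Q_1/M_1$), followed by the domination $M^*\ge Y_\infty^{\mathrm{fwd}}$, which gives $C^*\ge C_+^Y>0$.

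The routes differ in two sub-steps.

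\emph{Identification of $M^*$ as a solution.} The paper runs the iteration $Z_n=(M_nZ_{n-1}+Q_n)_+$ and notes that $Z_n\stackrel{d}{=}\max_{j\le n}Y_j\uparrow M^*$. It then relies on the limit law being stationary and on uniqueness of the stationary law to match $M^*$ with Goldie's solution. Your first-step identity $M^*=(Q_1+M_1M^{*\prime})_+$ exhibits $M^*$ pathwise as an exact solution, with $M^{*\prime}$ an independent copy. So if you apply Goldie's implicit renewal theorem (Theorem 2.3 of \cite{Goldie1991}) directly to $R=M^*$, uniqueness is not needed at all.

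\emph{Verification of hypotheses.} The integrability condition you check by hand is really the hypothesis of that Theorem 2.3, not of Theorem 6.2. Theorem 6.2 already derives it from $\bbe|Q_1|^\beta<\infty$ and $\bbe[(M_1L^+)^\beta]=\bbe[(Q_1^-)^\beta]<\infty$, which is all the paper checks. Your route is longer but self-contained. It also records the representation $C^*=\frac{1}{\beta m}\bbe[((M_1M^*+Q_1)_+)^\beta-(M_1M^*)^\beta]$, the analogue of \eqref{eq:C-plus-Y}, which the paper does not state.

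One slip needs fixing. For $\beta>1$ the cross term is $\bbe[|Q_1|M_1^{\beta-1}]\,\bbe[(M^*)^{\beta-1}]$. Since $Q_1$ and $M_1$ are built from the same $T_1$ and the same path of $V$ on $[0,T_1]$, the first factor does not split as $\bbe|Q_1|\cdot\bbe M_1^{\beta-1}$, so finiteness of $\bbe M_1^{\beta-1}$ is not what is needed. Bound it by H\"older instead: $\bbe[|Q_1|M_1^{\beta-1}]\le(\bbe|Q_1|^\beta)^{1/\beta}(\bbe M_1^\beta)^{(\beta-1)/\beta}=(\bbe|Q_1|^\beta)^{1/\beta}<\infty$. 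Your moment bound for $\bbe[(M^*)^{\beta-1}]$ via Minkowski, or subadditivity when $\beta-1\le1$, is fine as written.
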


\begin{proof}
We apply Goldie's result \cite[Theorem 6.2]{Goldie1991} on the solution of the distributional equation $M\stackrel{d}{=}(M_1M+Q_1)_+$. This is a particular case of Letac's model from \cite{Goldie1991} with $L=-Q_1/M_1$. The pair $(M_1,Q_1)$ satisfies the same Kesten--Goldie conditions as in Lemma \ref{lem:Y-inf-asymptotic} and Lemma \ref{lem:moments-detailed}. The additional moment condition $\bbe[(M_1L^+)^\beta]=\bbe[(Q_1^-)^\beta]\le\bbe[|Q_1|^\beta]<\infty$ holds automatically. Under these assumptions there exists a unique solution $M$ of this equation, and
\[
\bbp(M>u)\sim C^*u^{-\beta}\qquad\text{as }u\to\infty.
\]
The identification $M\stackrel{d}{=}M^*=\sup_n Y_n$ is described in \cite[discussion after Eq. (5.6.32)]{BDM2016}. The iterated sequence $Z_n:=(M_nZ_{n-1}+Q_n)_+$ with $Z_0:=0$ satisfies $Z_n\stackrel{d}{=}\max_{0\le j\le n}Y_j$, since the pairs $(M_i,Q_i)$ are independent and identically distributed. The monotone maximum $\max_{0\le j\le n}Y_j$ increases to $\sup_n Y_n=M^*$ almost surely, so $Z_n$ converges in distribution to $M^*$. By Lemma \ref{lem:Y-convergence} we have $Y_\infty^{\mathrm{fwd}}=\lim Y_n\le\sup_n Y_n=M^*$ almost surely, whence $\bbp(M^*>u)\ge\bbp(Y_\infty>u)$ for all $u$, and therefore $C^*\ge C_+^Y$.
\end{proof}

\begin{proof}[Proof of Theorem \ref{thm:main}]
By Lemma \ref{lem:Y-structure}, in the non-life insurance case $\Psi(u)=\bbp(M^*\ge u)$. By Lemma \ref{lem:sup-asymptotic}, $\lim_{u\to\infty}u^\beta\bbp(M^*>u)=C^*$. We establish 
\[
\lim_{u\to\infty}u^\beta\bbp(M^*\ge u)=C^*
\]
via a two-sided bound and a limiting argument. For any $\eps>0$,
\[
\bbp(M^*>u)\le\bbp(M^*\ge u)\le\bbp(M^*>u-\eps).
\]
Multiplying by $u^\beta$ and passing to the limit as $u\to\infty$,
\begin{multline*}
C^* = \lim u^\beta\bbp(M^*>u) \le \liminf u^\beta\bbp(M^*\ge u)\le \limsup u^\beta\bbp(M^*\ge u) \le
\\
\le \lim u^\beta\bbp(M^*>u-\eps)=C^*,
\end{multline*}
where the last equality uses $u^\beta\sim(u-\eps)^\beta$ as $u\to\infty$. Hence $u^\beta\bbp(M^*\ge u)\to C^*$, which proves \eqref{eq:main}.
\end{proof}

\section{Connection with exact analytic solutions in the Cram\'er--Lundberg model with investments}\label{sec:CLgBm}

We compare the established result with the exact analytic solution for the classical submodel---the Cram\'er--Lundberg model with proportional investment in a geometric Brownian motion, studied in \cite{AK2026,PRYu2026}. In this submodel the inter-jump times are exponentially distributed with intensity $\lambda>0$, the price of the risky asset is a geometric Brownian motion, and a fixed fraction $\kappa\in(0,1]$ of the capital is invested in the risky asset, the rest in a riskless account with rate $\tilde r\ge0$. The effective log-value of the portfolio is
\[
V_t=\bigl(\tilde a_\kappa-\tfrac12\tilde\sigma_\kappa^2\bigr)t+\tilde\sigma_\kappa W_t,\qquad\tilde a_\kappa:=\tilde r+\kappa(\tilde a-\tilde r),\;\;\tilde\sigma_\kappa:=\kappa\tilde\sigma,
\]
a purely diffusive L\'evy process; the jumps $\xi_i$ are strictly negative when $c>0$. The Laplace exponent of $V$ is
\[
\psi_V(q)=-\bigl(\tilde a_\kappa-\tfrac12\tilde\sigma_\kappa^2\bigr)q+\tfrac{\tilde\sigma_\kappa^2}{2}q^2.
\]
The equation $\psi_V(\beta)=0$ with $\beta>0$ gives
\begin{equation}\label{eq:beta-CLgBm}
\beta=\frac{2\tilde a_\kappa}{\tilde\sigma_\kappa^2}-1=\frac{2(\tilde r+\kappa(\tilde a-\tilde r))}{(\kappa\tilde\sigma)^2}-1=\gamma-1,
\end{equation}
where $\gamma:=2\tilde a_\kappa/\tilde\sigma_\kappa^2$ is the standard notation of \cite{AK2026,PRYu2026}. By Theorem \ref{thm:main}, $\Psi(u)\sim C^*u^{-\beta}=C^*u^{-\gamma+1}$, which agrees with the known result of \cite{AK2026}; there this asymptotics is established by analytic methods through reduction to a Volterra integral equation, and the constant is obtained explicitly via the solution of a special integral equation. In the more general Sparre Andersen model with an arbitrary distribution of the inter-jump times, the analytic approach via integro-differential equations loses its applicability, since there is no closed second-order equation for $\Psi$, whereas the method considered here, based on Goldie's theorem and the asymptotics of the supremum of the perpetuity, works without changes.

\paragraph{Model parameters and the exact representation.} For definiteness we fix the control values from \cite{PRYu2026}: $\lambda=1$, $\mu=1$ (the parameter of the exponential claim distribution), $\tilde a=0.15$, $\tilde\sigma=0.4$, $c=0.5$, $\tilde r=0.05$, $\kappa=0.9$. From \eqref{eq:beta-CLgBm},
\[
\beta=\frac{2\cdot0.14}{0.1296}-1\approx1.1605.
\]
By the exact-solution theorem of \cite{PRYu2026}, the survival probability $\Phi(u)=1-\Psi(u)$ for this model is expressed through an integral of the regular branch of the double confluent Heun function $\mathrm{HeunD}$ \cite{slavyanov} with parameters determined by the set $(\tilde a,\tilde\sigma,c,\kappa,\tilde r,\lambda,\mu)$.

\paragraph{Agreement of the asymptotics.} From the properties of the analytic continuation of the Heun function near the irregular singular point $\zeta=\infty$, studied in \cite[Proposition 1]{PRYu2026}, it follows that the ruin probability has the power-law asymptotics
\[
\Psi(u)\sim C_\infty\,u^{-\gamma+1}\quad\text{as }u\to\infty,
\]
where $\gamma=2((\tilde a-\tilde r)\kappa+\tilde r)/(\kappa\tilde\sigma)^2$. In our notation $\gamma-1$ coincides exactly with the exponent $\beta$, and the constant $C_\infty$, defined in \cite{PRYu2026} as the normalizing factor in the representation via Heun functions, coincides with $C^*$. Thus, Theorem \ref{thm:main} agrees with the asymptotics obtained in \cite{PRYu2026} by the analytic theory of differential equations. The coincidence of the two methodologies---Goldie's implicit renewal theory and the analysis of special functions---on their common intersection serves as a nontrivial cross-check of both methods. We note that the work does not recompute the constant itself by the second method: the exponent and the internal consistency of the tail are confirmed numerically, while the equality $C_\infty=C^*$ is established analytically.

\paragraph{Numerical illustration of the agreement.} For the parameters above, the survival probability $\Phi(u)=1-\Psi(u)$ is found by numerically integrating the corresponding ODE in canonical form
\[
u^2g''+[\mu u^2+(\gamma+2)u+\theta]g'+[\mu\gamma u+(\mu\theta+\gamma-\nu)]g=0
\]
from \cite{PRYu2026} (where, for convenience, we have renamed the ODE coefficients $\theta:=2c/(\kappa\tilde\sigma)^2$ and $\nu:=2\lambda/(\kappa\tilde\sigma)^2$, to avoid confusion with the Cram\'er exponent $\beta=\gamma-1$); this is neither Monte Carlo simulation nor an approximation, but a direct solution of the Cauchy problem, equivalent to evaluating the Heun function. The limiting constant $C^*$ is extracted by extrapolating the tail coefficient $u^\beta\Psi(u)$ as $u\to\infty$ (Richardson extrapolation), which gives $C^*\approx10.674$. Table \ref{tab:pryu-numerics} lists the values of $\Psi(u)$ and $u^\beta\Psi(u)$; one observes a stable convergence $u^\beta\Psi(u)\to C^*$, with the remainder $u^\beta\Psi(u)-C^*$ decaying roughly like $u^{-1}$ (as $u$ doubles, the relative error decreases roughly by half)---in qualitative agreement with the known rate-of-convergence estimates for perpetuities \cite[Theorem 3.2]{Goldie1991}. A rigorous transfer of such estimates to $\Psi(u)$ is a separate question, see Section \ref{sec:discussion}. Figure \ref{fig:pryu-loglog} shows the graph of $\Psi(u)$ on a double logarithmic scale: for large $u$ the curve approaches a straight line of slope $-\beta\approx-1.1605$ and coincides with the asymptote $C^*u^{-\beta}$, which constitutes the geometric content of Theorem \ref{thm:main}.

\begin{table}[ht]
\centering
\caption{Numerical verification of the asymptotics $\Psi(u)\sim C^*u^{-\beta}$ for the Cram\'er--Lundberg model with proportional investment ($\kappa=0.9$, $\beta\approx1.1605$). Parameters: $\mu=\lambda=1$, $\tilde a=0.15$, $\tilde\sigma=0.4$, $c=0.5$, $\tilde r=0.05$. Limiting constant $C^*\approx10.674$.}
\label{tab:pryu-numerics}
\begin{tabular}{cccc}
\toprule
$u$ & $\Psi(u)$ & $u^{\beta}\Psi(u)$ & $(u^{\beta}\Psi(u)-C^*)/C^*$\\
\midrule
$10$    & $4.77\times10^{-1}$ & $6.903$  & $-3.53\times10^{-1}$\\
$50$    & $1.05\times10^{-1}$ & $9.810$  & $-8.10\times10^{-2}$\\
$100$   & $4.89\times10^{-2}$ & $10.237$ & $-4.09\times10^{-2}$\\
$200$   & $2.23\times10^{-2}$ & $10.454$ & $-2.06\times10^{-2}$\\
$500$   & $7.81\times10^{-3}$ & $10.586$ & $-8.24\times10^{-3}$\\
$1000$  & $3.51\times10^{-3}$ & $10.630$ & $-4.10\times10^{-3}$\\
$2000$  & $1.57\times10^{-3}$ & $10.652$ & $-2.02\times10^{-3}$\\
\bottomrule
\end{tabular}
\end{table}

\begin{figure}[ht]
\centering
\includegraphics[width=0.78\textwidth]{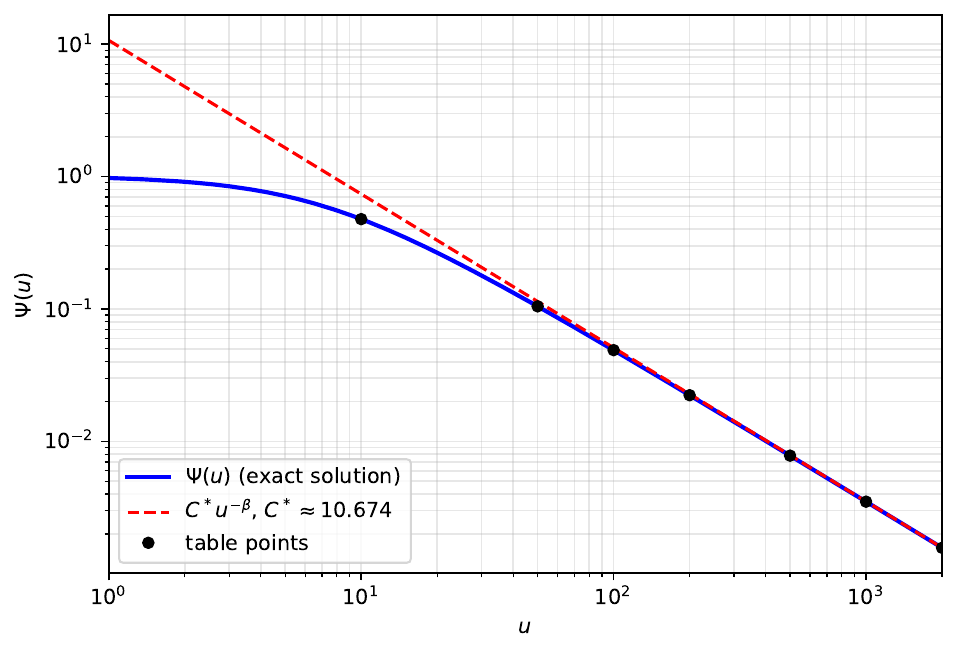}
\caption{The ruin probability $\Psi(u)$ and its asymptote $C^*u^{-\beta}$ on a double logarithmic scale ($\kappa=0.9$).}
\label{fig:pryu-loglog}
\end{figure}

\section{Discussion}\label{sec:discussion}

The established result---the exact equality $\lim u^\beta\Psi(u)=C^*$---gives a complete description of the leading term of the asymptotics of the ruin probability in the Sparre Andersen model for non-life insurance with investments in an arbitrary L\'evy process. The exponent $\beta$ is the Cram\'er root of the Laplace exponent $\psi_V$ of the L\'evy process $V$, independent of the distribution of the inter-jump times. The distribution $F$ affects only the value of the constant, not the exponent. At the same time, no explicit formula for $C^*$ itself is available in the general model (see below); only the lower bound $C^*\ge C_+^Y$ is known, where $C_+^Y$ has the integral representation \eqref{eq:C-plus-Y}. Compared with the two-sided estimate \eqref{eq:liminflimsup} of \cite{EKS2022,KLP2026}, the new content is precisely the existence of an exact limit rather than of the order of the asymptotics.

The natural directions for continuing the research are the following.

\textit{Annuity models.} For annuity payments ($c<0$, $\xi_i>0$ a.s.) the trajectories of $Y$ have a positive continuous drift between jumps and negative jumps at the moments $T_n$, so that $\sup_{t\ge0}Y_t\ne\sup_{n\ge0}Y_{T_n}$ in general. The discrete reduction of Lemma \ref{lem:Y-structure} ceases to work directly: ruin may occur continuously between jumps, and for the Markov property one has to consider the pair $(X_t,D_t)$ with the age process $D_t$. An analytic study of the annuity model via an integro-differential equation was carried out in \cite{P2601}; the transfer of the result of the present work (by the probabilistic method) to the annuity case requires a substantial adaptation of the technique and is the content of a separate paper.

\textit{The mixed model.} When $F_\xi$ charges both half-axes, $Y_\infty$ takes values of both signs, and the question of the two-sided tail of $\cL(Y_\infty)$ arises: the left tail $\bbp(Y_\infty<-u)$ has the same power-law form $\sim C^- u^{-\beta}$ under appropriate conditions on the support of $\cL(Y_\infty)$ (a two-sided version of Proposition 2.5.4 of \cite{BDM2016}). This topic is the content of a separate paper.

\textit{Quantitative rate-of-convergence estimates.} The limiting equality obtained does not provide a quantitative estimate of the rate of convergence $|u^\beta\Psi(u)-C^*|\le Ku^{-\delta}$. Such estimates in perpetuity asymptotics are known \cite[Theorem 3.2]{Goldie1991} under additional conditions on the moment characteristics of the pair $(M_1,Q_1)$. Their transfer to $\Psi(u)$ through the links we have built requires separate work.

\textit{An explicit formula for $C^*$.} Obtaining an explicit (computable) formula for the constant $C^*$ in the general model is an open question. In the Cram\'er--Lundberg model with geometric Brownian motion an exact formula is established by other methods (see Section \ref{sec:CLgBm}); the agreement of the two approaches on their common intersection gives a nontrivial cross-check of both programs.

Another meaningful direction is the relaxation of the moment condition on the claim sizes $\bbe[|\xi|^\beta]<\infty$, which excludes heavy-tailed claims with regularly varying tails. This direction already has some results. In \cite{HultLindskog2011} a model with investments in general semimartingales and heavy-tailed claims is considered. In \cite{GaierGrandits2004} concrete asymptotic estimates are obtained for a model with investments at a constant interest rate under regularly varying claim tails. The theory of regularly varying functions and their probabilistic applications is presented in \cite{BinghamGoldieTeugels1989}, and the classical theory of extremal events in insurance and finance in \cite{EKM1997}. The transfer of the apparatus of the present work to the heavy-tailed case is the content of a separate study. When the condition $\bbe[|\xi|^\beta]<\infty$ is dropped, Goldie's theorem ceases to give a purely power-law asymptotics with exponent $\beta$, and the asymptotic behavior is determined by the competition between the claim tail and the Cram\'er tail of the perpetuity. When the tail of $Q_1$ varies regularly with exponent less than $\beta$, the tail of $Y_\infty$ inherits the same regular variation. This result was first established by Grincevicius \cite{Grincevicius1975}. An improved proof and the converse direction were given by Grey \cite{Grey1994}.

\section*{Competing interests}
The author declares no competing interests.


\begin{thebibliography}{99}

\bibitem{AK2026}
Antipov, V., Kabanov, Yu.: On the integro-differential equation arising in the ruin problem for non-life insurance models with investment. Mathematics, {\bf 14}, 1035 (2026)

\bibitem{AsmussenAlbrecher2010}
Asmussen, S., Albrecher, H.: Ruin Probabilities, 2nd ed. World Scientific, Singapore (2010)

\bibitem{BinghamGoldieTeugels1989}
Bingham, N.H., Goldie, C.M., Teugels, J.L.: Regular Variation. Cambridge University Press, Cambridge (1989)

\bibitem{BDM2016}
Buraczewski, D., Damek, E., Mikosch, Th.: Stochastic Models with Power-Law Tails. The Equation $X=AX+B$. Springer Series in Operations Research and Financial Engineering. Springer, Berlin (2016)

\bibitem{EKS2022}
Eberlein, E., Kabanov, Yu., Schmidt, T.: Ruin probabilities for a Sparre Andersen model with investments. Stochastic Processes and their Applications, {\bf 144}, 72--84 (2022)

\bibitem{EKM1997}
Embrechts, P., Kl\"uppelberg, C., Mikosch, T.: Modelling Extremal Events for Insurance and Finance. Springer, Berlin (1997)

\bibitem{Feller1971}
Feller, W.: An Introduction to Probability Theory and Its Applications, Vol. II, 2nd ed. Wiley, New York (1971)

\bibitem{GaierGrandits2004}
Gaier, J., Grandits, P.: Ruin probabilities and investment under interest force in the presence of regularly varying tails. Scandinavian Actuarial Journal, 256--278 (2004)

\bibitem{Goldie1991}
Goldie, C.M.: Implicit renewal theory and tails of solutions of random equations. Annals of Applied Probability, {\bf 1}, 126--166 (1991)

\bibitem{Grandits2004}
Grandits, P.: A Karamata-type theorem and ruin probabilities for an insurer investing proportionally in the stock market. Insurance: Mathematics and Economics, {\bf 34}, 297--305 (2004)

\bibitem{Grey1994}
Grey, D.R.: Regular variation in the tail behaviour of solutions of random difference equations. Annals of Applied Probability, {\bf 4}, 169--183 (1994)

\bibitem{Grincevicius1975}
Grincevicius, A.K.: One limit distribution for a random walk on the line. Lithuanian Mathematical Journal, {\bf 15}, 580--589 (1975)

\bibitem{HultLindskog2011}
Hult, H., Lindskog, F.: Ruin probabilities under general investments and heavy-tailed claims. Finance and Stochastics, {\bf 15}, 243--265 (2011)

\bibitem{KLP2026}
Kabanov, Yu., Legenkiy, D., Promyslov, P.: Distributional equations and the ruin problem for the Sparre Andersen model with investments. Extremes, {\bf 29}, 65--87 (2026)

\bibitem{KP2023}
Kabanov, Yu., Promyslov, P.: Ruin probabilities for a Sparre Andersen model with investments: the case of annuity payments. Finance and Stochastics, {\bf 27}, 887--902 (2023)

\bibitem{KalashnikovNorberg2002}
Kalashnikov, V., Norberg, R.: Power tailed ruin probabilities in the presence of risky investments. Stochastic Processes and their Applications, {\bf 98}, 211--228 (2002)

\bibitem{Kesten1973}
Kesten, H.: Random difference equations and renewal theory for products of random matrices. Acta Mathematica, {\bf 131}, 207--248 (1973)

\bibitem{Paulsen1993}
Paulsen, J.: Risk theory in a stochastic economic environment. Stochastic Processes and their Applications, {\bf 46}, 327--361 (1993)

\bibitem{Paulsen2002}
Paulsen, J.: On Cram\'er-like asymptotics for risk processes with stochastic return on investments. Annals of Applied Probability, {\bf 12}, 1247--1260 (2002)

\bibitem{PergamenshchikovZeitouni2006}
Pergamenshchikov, S., Zeitouni, O.: Ruin probability in the presence of risky investments. Stochastic Processes and their Applications, {\bf 116}, 267--278 (2006)

\bibitem{P2601}
Promyslov, P.: On the integro-differential equation arising in the ruin problem for annuity payment models. arXiv:2601.01447 (2026)

\bibitem{P2604}
Promyslov, P.: Existence of a classical solution to the integro-differential equation arising in the Cram\'er--Lundberg non-life insurance model with proportional investment. arXiv:2604.05143 (2026)

\bibitem{PRYu2026}
Promyslov, P., Romanov, M., Yurieva, G.: Exact solution of the ruin problem in the Cram\'er--Lundberg model with proportional investment. arXiv:2604.08745 (2026)

\bibitem{slavyanov}
Slavyanov, S.Yu., Lay, W.: Special Functions: A Unified Theory Based on Singularities. Oxford University Press, Oxford (2000)

\bibitem{Vervaat1979}
Vervaat, W.: On a stochastic difference equation and a representation of nonnegative infinitely divisible random variables. Advances in Applied Probability, {\bf 11}, 750--783 (1979)

\end{thebibliography}
\end{document}